\newtheorem{theorem}{Theorem}
\newtheorem{lemma}{Lemma}
\newtheorem{remark}{Remark}
\newtheorem{problem}{Problem}
\newtheorem{proposition}{Proposition}
\begin{document}
\author{D. Lukkassen, L.E. Persson, G. Tephnadze, G. Tutberidze}
\title[Reisz logarithmic means ]{Some inequalities related to strong convergence of Riesz
logarithmic means}

\address{D. Lukkassen, UiT The Arctic University of Norway, P.O. Box 385, N-8505, Narvik, Norway}
\email{Dag.Lukkassen@uit.no}

\address{L.E. Persson, UiT The Arctic University of Norway, P.O. Box 385, N-8505, Narvik, Norway and Department of Mathematics and Computer Science, Karlstad University, 65188 Karlstad, Sweden.}
\email{lars.e.persson@uit.no}

\address{G. Tephnadze, The University of Georgia, School of Science and Technology, 77a Merab Kostava St, Tbilisi, 0128, Georgia}
\email{g.tephnadze@ug.edu.ge }

\address{G. Tutberidze, The University of Georgia, School of IT, Engineering and Mathematics, 77a Merab Kostava St, Tbilisi, 0128, Georgia and UiT The Arctic University of Norway, P.O. Box 385, N-8505, Narvik, Norway}
\email{giorgi.tutberidze1991@gmail.com}

\thanks{The research was supported by Shota Rustaveli National Science Foundation grant YS-18-043.}
\date{}
\maketitle

\begin{abstract}
In this paper we derive a new strong convergence theorem of Riesz logarithmic means of the one-dimensional Vilenkin-Fourier (Walsh-Fourier) series. The corresponding inequality is pointed out and it is also proved that the inequality is in a sense sharp, at least for the case with Walsh-Fourier series.
\end{abstract}

\date{}

\textbf{2010 Mathematics Subject Classification.} 26D10, 26D20, 42C10.

\textbf{Key words and phrases:} inequalities, Vilenkin systems, Walsh system, Riesz logarithmic means, martingale Hardy space, strong convergence.

\section{Introduction}

Concerning definitions used in this introduction we refer to section 2. Weisz \cite{We2} proved the boundedness of the maximal operator of Fejér means  $\sigma ^{\psi,\ast }$ with respect to bounded Vilenkin systems from the martingale Hardy space $H_p(G_m)$ to the space $L_p(G_m),$ for $p>1/2$. Simon \cite{Si1} gave a counterexample, which shows that boundedness does not hold for $0<p<1/2.$ The corresponding counterexample for $p=1/2$ is due to Goginava \cite{Go}. Moreover, Weisz \cite{We3} proved the following result:

\textbf{Theorem W. }The maximal operator of Fejér means $\sigma ^{\psi,\ast }$ is bounded from the Hardy space $H_{1/2}(G_m)$ to the space $weak-L_{1/2}(G_m).$

In \cite{tep2} and \cite{tep3} it were proved that the maximal operator $
\widetilde{\sigma }_{p}^{\psi,\ast}$ defined by
\begin{equation*}
\widetilde{\sigma}_{p}^{\psi,\ast}:=\sup_{n\in \mathbb{N}}\frac{\left\vert\sigma_{n}^{\psi}\right\vert }{\left( n+1\right)^{1/p-2}\log^{2\left[ 1/2+p\right]}\left(n+1\right) },
\end{equation*}
where $0<p\leq 1/2$ and $\left[ 1/2+p\right] $ denotes integer part of $%
1/2+p,$ is bounded from the Hardy space $H_p(G_m)$ to the space $L_p(G_m).$ Moreover, for any nondecreasing function $\varphi :\mathbb{N}_{+}\rightarrow\lbrack 1,\infty)$ satisfying the condition
\begin{equation}\label{cond1}
\overline{\lim_{n\rightarrow \infty }}\frac{\left( n+1\right) ^{1/p-2}\log
	^{2\left[ 1/2+p\right] }\left( n+1\right) }{\varphi \left( n\right) }%
=+\infty ,  
\end{equation}%
there exists a martingale $f\in H_p(G_m),$ such that
\begin{equation*}
\underset{n\in \mathbb{N}}{\sup}\left\Vert \frac{\sigma^\psi_nf}{\varphi
	\left( n\right) }\right\Vert _{p}=\infty .
\end{equation*}

For Walsh-Kaczmarz system some analogical results were proved in \cite{GNCz} and \cite{tep5}.

Weisz \cite{We2} considered the norm convergence of Fej\'er means of
Vilenkin-Fourier series and proved the following result:

\textbf{Theorem W1\ (Weisz).} Let $p>1/2$ and $f\in H_p(G_m).$ Then there
exists an absolute constant $c_{p}$, depending only on $p$, such that for
all $k=1,2,\dots$ and $f\in H_p(G_m)$ the following inequality holds:
\begin{equation*}
\left\Vert \sigma^{\psi}_kf\right\Vert_p\leq c_p\left\Vert f\right\Vert
_{H_p(G_m)}.
\end{equation*}

Moreover, in \cite{tep1} it was proved that the assumption $p>1/2$ in Theorem W1 is essential. In fact, the following is true:

\textbf{Theorem T1.} There exists a martingale $f\in H_{1/2}(G_m)$ such that
\begin{equation*}
\sup_{n\in\mathbb{N}}\left\Vert \sigma^{\psi}_nf\right\Vert _{1/2}=+\infty.
\end{equation*}

Theorem W1 implies that
\begin{equation*}
\frac{1}{n^{2p-1}}\overset{n}{\underset{k=1}{\sum }}\frac{\left\Vert \sigma^{\psi}_k f\right\Vert_p^p}{k^{2-2p}}\leq c_p\left\Vert f\right\Vert
_{H_p(G_m)}^p,\text{ \ \ \ }1/2<p<\infty,\ n=1,2,\dots.
\end{equation*}

If Theorem W1 holds for $0<p\leq 1/2,$ then we would have that
\begin{equation} \label{2cc}
\frac{1}{\log ^{\left[ 1/2+p\right] }n}\overset{n}{\underset{k=1}{\sum }}\frac{\left\Vert \sigma^\psi_kf\right\Vert_p^p}{k^{2-2p}}\leq
c_{p}\left\Vert f\right\Vert _{H_p(G_m)}^p, \ 0<p\leq 1/2, \ \ n=2,3,\dots.  
\end{equation}

For the Walsh system in \cite{tep6} and for the bounded Vilenkin systems in \cite{tep5} were proved that (\ref{2cc}) holds, though Theorem T1 is not true for $0<p<1/2.$

Some results concerning summability of Fejér means of Vilenkin-Fourier series can be
found in \cite{Fu,Gat,GNCz,PS,Sc,Si2}.

Riesz logarithmic means with respect to the Walsh system was studied by
Simon \cite{Si1}, Goginava \cite{gogin}, Gát, Nagy \cite{GN} and for
Vilenkin systems by Gát \cite{Ga1} and Blahota, Gát \cite{bg}. Moreover, in \cite{PTW1} it was proved that the maximal operator of Riesz
logarithmic means of Vilenkin-Fourier series is bounded from the martingale
Hardy space $H_p(G_m)$ to the space $L_p(G_m)$ when $p>1/2$ and is not bounded
from the martingale Hardy space $H_p(G_m)$ to the space $L_p(G_m)$ when $0<p\leq1/2.$

In \cite{tep2} and \cite{tep3} it were proved that Riesz logarithmic means have better properties than Fejér means. In particular, it was considered the following maximal operator $\widetilde{R}_p^{\psi,\ast}$ of Riesz logarithmic means
$\widetilde{R}_{p}^{\psi,\ast}$ defined by
\begin{equation*}
\widetilde{R}_p^{\psi,\ast}:=\sup_{n\in \mathbb{N}}\frac{\left\vert R_{n}^{\psi}\right\vert \log(n+1)}{\left(n+1\right) ^{1/p-2}\log ^{2\left[ 1/2+p\right] }\left( n+1\right)},
\end{equation*}
where $0<p\leq 1/2$ and $\left[ 1/2+p\right] $ denotes integer part of $
1/2+p,$ is bounded from the Hardy space $H_p(G_m)$ to the space $L_p(G_m).$

Moreover, this result is sharp in the following sense: For any nondecreasing function $\varphi :\mathbb{N}_{+}\rightarrow\lbrack 1,\infty)$ satisfying the condition
\begin{equation}\label{cond2}
\overline{\lim_{n\rightarrow \infty}}\frac{\left(n+1\right)^{1/p-2}\log^{2\left[1/2+p\right]} \left(n+1\right)}{\varphi\left(n\right)\log(n+1)}=\infty,
\end{equation}
there exists a martingale $f\in H_p(G_m),$ such that
\begin{equation*}
\underset{n\in\mathbb{N}}{\sup}\left\Vert\frac{R^{\psi}_nf}{\varphi\left(n\right)} \right\Vert_p=\infty.
\end{equation*}

The main aim of this paper is to derive a new strong convergence theorem of Riesz logarithmic means of one-dimensional Vilenkin-Fourier (Walsh-Fourier)
series (see Theorem 1). The corresponding inequality is pointed out. The sharpness is proved in Theorem 2, at least for the case with Walsh-Fourier series. 

The paper is organized as follows: In Section 2 some definitions and notations are presented. The main results are presented and proved in Section 3.  Section 4 is reserved for some concluding remarks and open problems.

\section{Definitions and Notations}

Let $\mathbb{N}_{+}$ denote the set of positive integers, $\mathbb{N}:=\mathbb{N}_{+}\cup \{0\}.$

Let $m:=(m_0, m_1,...)$ denote a sequence of positive integers not
less than 2.

Denote by
\begin{equation*}
Z_{m_{k}}:=\{0,1,...m_{k}-1\}
\end{equation*}
the additive group of integers modulo $m_{k}.$

Define the group $G_m$ as the complete direct product of the group $Z_{m_j}$ with the product of the discrete topologies of $Z_{m_j}$'s.

The direct product $\mu $ of the measures
\begin{equation*}
\mu_k\left( \{j\}\right):=1/m_{k}\text{\qquad}(j\in Z_{m_{k}})
\end{equation*}%
is a Haar measure on $G_m$ with $\mu\left(G_m\right)=1.$

If $\sup_{n\in \mathbb{N}}m_n<\infty$, then we call $G_{m}$ a bounded Vilenkin group.
If the generating sequence $m$ is not bounded, then $G_{m}$ is said to be an
unbounded Vilenkin group. In this paper we discuss only bounded Vilenkin groups.

The elements of $G_m$ are represented by sequences
\begin{equation*}
x:=(x_0,x_1,...,x_j,...)\qquad \left( \text{ }x_k\in
Z_{m_k}\right).
\end{equation*}

It is easy to give a base for the neighborhood of $G_{m}$ namely
\begin{equation*}
I_{0}\left( x\right) :=G_{m},
\end{equation*}%
\begin{equation*}
I_{n}(x):=\{y\in G_{m}\mid y_{0}=x_{0},...y_{n-1}=x_{n-1}\},\text{ \ }(x\in
G_{m},\text{ }n\in \mathbb{N}).
\end{equation*}

Denote $I_{n}:=I_{n}\left( 0\right) $ for $n\in \mathbb{N}$ and $\overline{%
I_{n}}:=G_{m}$ $\backslash $ $I_{n}$.

Let
\begin{equation*}
e_{n}:=\left( 0,0,...,x_{n}=1,0,...\right) \in G_{m}\qquad \left( n\in\mathbb{N}\right).
\end{equation*}

It is evident that
\begin{equation}\label{3}
\overline{I_M}=\left(\overset{M-2}{\underset{k=0}{\bigcup}}\overset{m_k-1} {\underset{x_k=1}{\bigcup}}\overset{M-1}{\underset{l=k+1}{\bigcup}} \overset{m_l-1}{\underset{x_l=1}{\bigcup}}I_{l+1}\left(x_ke_k+x_le_l\right) \right)\bigcup\left( \underset{k=1}{\bigcup\limits^{M-1}} \overset{m_{k}-1} {\underset{x_k=1}{\bigcup}}I_M\left(x_ke_k\right)\right). 
\end{equation}

If we define the so-called generalized number system based on $m$ in the
following way :
\begin{equation*}
M_{0}:=1,\text{ \qquad }M_{k+1}:=m_{k}M_{k\text{ }}\ \qquad (k\in \mathbb{N}),
\end{equation*}
then every $n\in \mathbb{N}$ can be uniquely expressed as $n=\overset{\infty
}{\underset{k=0}{\sum }}n_{j}M_{j},$ where $n_{j}\in Z_{m_{j}}$ $~(j\in
\mathbb{N})$ and only a finite number of $n_{j}`$s differ from zero. Let $\left| n\right|:=\max \{j\in \mathbb{N}; \ \ n_j\neq 0\}.$

The norm (or quasi-norm when $p<1$) of the space $L_{p}(G_{m})$ is defined by \qquad
\qquad \thinspace \
\begin{equation*}
\left\| f\right\| _{p}:=\left( \int_{G_{m}}\left| f\right| ^{p}d\mu \right)
^{1/p}\qquad \left( 0<p<\infty \right) .
\end{equation*}

The space $weak-L_p\left( G_m\right) $ consists of all measurable
functions $f$ for which

\begin{equation*}
\left\Vert f\right\Vert_{weak-L_p(G_m)}:=\underset{\lambda>0}{\sup} \lambda^p \mu\left(f>\lambda \right)<+\infty.
\end{equation*}

Next, we introduce on $G_m$ an orthonormal system which is called the
Vilenkin system.

Let us define the complex valued function $r_{k}\left( x\right)
:G_{m}\rightarrow\mathbb{C},$ the generalized Rademacher functions as
\begin{equation*}
r_k\left( x\right) :=\exp \left( 2\pi ix_{k}/m_{k}\right) \text{ \qquad }\left( i^{2}=-1,\ \ \ x\in G_{m},\ \ \ k\in \mathbb{N}\right) .
\end{equation*}

Now, define the Vilenkin system $\psi :=(\psi _{n}:n\in \mathbb{N})$ on $%
G_{m}$ as:
\begin{equation*}
\psi_n(x):=\overset{\infty}{\underset{k=0}{\Pi}}r_k^{n_k}\left(
x\right)\text{ \qquad }\left(n\in \mathbb{N}\right).
\end{equation*}

The Vilenkin systems are orthonormal and complete in $L_{2}\left( G_{m}\right)$ (for details see e.g \cite{AVD}).

Specifically, we call this system the Walsh-Paley one 
if $m_k= 2, \ \text{for all} \ k\in \mathbb{N}.$ In this case we have dyadic group $G_2=\prod_{j=0}^{\infty}Z_{2},$ which is called the Walsh group and Vilenkin system coincides Walsh functions defined by (for details see e.g. \cite{G-E-S} and \cite{S-W-S})
\begin{equation*}
w_{n}(x):=\underset{k=0}{\overset{\infty }{\prod }}r_{k}^{n_{k}}\left(
x\right) =r_{\left\vert n\right\vert }\left( x\right) \left( -1\right) ^{\underset{k=0}{\overset{\left\vert n\right\vert -1}{\sum }}n_{k}x_{k}}\text{\qquad }\left( n\in \mathbb{N}\right),
\end{equation*}
where $n_k=0\vee1$ and $x_k=0\vee 1.$

Now, we introduce analogues of the usual definitions in Fourier-analysis.

If $f\in L_{1}\left( G_{m}\right), $ then we can establish the Fourier
coefficients, the partial sums of the Fourier series, the Fejér means, the
Dirichlet and Fejér kernels with respect to the Vilenkin system $\psi $ (Walsh system $w$) in
the usual manner:%
\begin{eqnarray*}
\widehat{f}^{\alpha }\left( k\right) &:&=\int_{G_m}f\overline{\alpha }_kd\mu,\text{\ \ \ \ \ \ } (\alpha_k=w_{k}\text{ or }\psi_k), \qquad \left(k\in \mathbb{N}\right) , \\
S^\alpha_n f&:&=\sum_{k=0}^{n-1}\widehat{f}\left(k\right)\alpha_k, \qquad  (\alpha _{k}=w_{k}\text{ or }\psi_k),\text{ \ \
\ }\left(n\in \mathbb{N}_{+}, \ S^{\alpha}_0 f:=0\right) , \\
\sigma^{\alpha}_n f &:&=\frac{1}{n}\sum_{k=0}^{n-1}S^{\alpha}_k f,\text{ \ \ \ \ \ \ } (\alpha=w\text{ or }\psi ), \ \ \ \ \ \ \ \ \ \ \left(n\in \mathbb{N}_{+}\right) , \\
D^{\alpha}_n &:&=\sum_{k=0}^{n-1}\alpha_{k},\text{ \qquad\ \ \ \ \ } (\alpha=w\text{ or }\psi ), \ \ \ \ \ \ \ \ \ \  \left(n\in \mathbb{N}_{+}\right), \\
K^{\alpha}_n &:&=\frac{1}{n}\overset{n-1}{\underset{k=0}{\sum }}D^{\alpha}_{k}, \ \ \ \ \ \ \ \ \ (\alpha=w\text{ or }\psi ),  \text{ \qquad \ \ }\left(n\in \mathbb{N}_{+}\right) .
\end{eqnarray*}%
$\qquad $ $\qquad $

It is well-known that (see e.g. \cite{AVD})

\begin{equation} \label{kodala}
\sup_{n\in \mathbb{N}}\int_{G_m}\left\vert K^{\alpha}_n\right\vert d\mu \leq c<\infty, \ \ \ \ \text{where}\ \ \ \ \alpha=w\text{ or }\psi.
\end{equation}

The $\sigma $-algebra generated by the intervals $\left\{ I_{n}\left(
x\right) :x\in G_{m}\right\} $ will be denoted by $\digamma _{n}$ $\left(
n\in \mathbb{N}\right) .$ Denote by $f=\left( f^{\left( n\right) },n\in
\mathbb{N}\right) $ a martingale with respect to $\digamma _{n}$ $\left(
n\in \mathbb{N}\right)$ (for details see e.g. \cite{cw}, \cite{Ne}, \cite{We1}). The maximal
function of a martingale $f$ is defend by \qquad
\begin{equation*}
f^{\ast }=\sup_{n\in \mathbb{N}}\left\vert f^{\left( n\right) }\right\vert.
\end{equation*}

In the case $f\in L_1(G_m),$ the maximal functions are also given by
\begin{equation*}
f^{\ast }\left( x\right) =\sup_{n\in \mathbb{N}}\frac{1}{\left\vert
I_{n}\left( x\right) \right\vert }\left\vert \int_{I_{n}\left( x\right)
}f\left( u\right) \mu \left( u\right) \right\vert .
\end{equation*}

For $0<p<\infty$ the Hardy martingale spaces $H_p\left(G_m\right)$
consist of all martingales for which
\begin{equation*}
\left\Vert f\right\Vert _{H_p(G_m)}:=\left\Vert f^{\ast }\right\Vert
_p<\infty.
\end{equation*}

If $f\in L_{1}(G_m),$ then it is easy to show that the sequence $\left(
S_{M_n}f :n\in \mathbb{N}\right) $ is a martingale. If $
f=\left( f^{\left( n\right) },n\in \mathbb{N}\right) $ is a martingale, then the Vilenkin-Fourier (Walsh-Fourier) coefficients must be defined in a slightly different
manner, namely
\begin{equation*}
\widehat{f}\left( i\right):=\lim_{k\rightarrow \infty}\int_{G_m} f^{\left( k\right)}\left( x\right) \overline{\alpha}_{i}\left(x\right) d\mu \left( x\right), \ \ \ \text{where} \ \ \ \alpha=w\text{ \ or \ }\psi.
\end{equation*}%
\qquad \qquad \qquad \qquad

The Vilenkin-Fourier coefficients of $f\in L_{1}\left( G_{m}\right) $ are
the same as those of the martingale 
$\left( S_{M_n}f:n\in\mathbb{N}\right)$ obtained from $f.$

In the literature, there is the notion of Riesz logarithmic means of the
Fourier series. The $n$-th Riesz logarithmic means of the Fourier series
of an integrable function $f$ is defined by

\begin{equation*}
R^\alpha_nf:=\frac{1}{l_{n}}\overset{n}{\underset{k=1}{\sum}} \frac{S^\alpha_k f}{k},\ \ \ \text{where}\ \ \ \alpha=w \ \text{or }\psi,
\end{equation*}
with  
$$l_n:=\sum_{k=1}^n\frac{1}{k}.$$ 

The kernels of Riesz`s logarithmic means are defined by

\begin{equation*}
L^\alpha_n:=\frac{1}{l_{n}}\overset{n}{\underset{k=1}{\sum }}\frac{D^\alpha_k}{k}, \ \ \ \text{where} \ \ \  (\alpha =w\text{ or }\psi ).
\end{equation*}

For the martingale $f$ we consider the following maximal operators:
\begin{eqnarray*}
&&\sigma ^{\alpha,\ast }f:\sup_{n\in \mathbb{N}}\left\vert \sigma^{\alpha}
_{n}f\right\vert ,\ \ \ \ \ \ \ \ \ \ \ \ \ \ \ \ \ \ \ \ \ \ \ \ \ (\alpha =w\text{ or }\psi), \\
&&R^{\ast }f:=\underset{n\in \mathbb{N}}{\sup }\left\vert R^{\alpha}_n f\right\vert,\ \ \ \ \ \ \ \ \ \ \ \ \ \ \ \ \ \ \ \ \ \ \ \ 
(\alpha =w\text{ or }\psi),\\
&&\overset{\sim }{R}^{\alpha,\ast }f :=\underset{n\in \mathbb{N}}{\sup }\frac{\left\vert R^{\alpha}_{n}f\right\vert }{\log\left( n+1\right)}
,\ \ \ \ \ \ \ \ \ \ \ \ \ \ \ \ (\alpha =w\text{ or }\psi), \\
&&\overset{\sim }{R}_{p}^{\alpha,\ast }f:=\underset{n\in \mathbb{N}}{\sup }\frac{\log \left(
n+1\right)\left\vert R^{\alpha}_{n}f\right\vert }{\left( n+1\right) ^{1/p-2}}, \ \ \ \ \ \ \ \ (\alpha =w\text{ or }\psi).
\end{eqnarray*}
A bounded measurable function $a$ is a $p$-atom, if there exist an
interval $I$, such that \qquad
\begin{equation*}
\int_{I}ad\mu =0,\text{ \ }\left\Vert a\right\Vert _{\infty }\leq \mu \left(
I\right) ^{-1/p},\text{\ \ supp}\left( a\right) \subset I.
\end{equation*}

\bigskip In order to prove our main results we need the following lemma of Weisz (for details see e.g. Weisz \cite{We4}):

\begin{proposition}\label{atomicdec} A martingale $f=\left( f^{\left( n\right) },n\in \mathbb{N}\right) $ is in $H_p(G_m)\left( 0<p\leq 1\right) $ if and only if there exist
a sequence $\left( a_{k},k\in \mathbb{N}\right) $ of p-atoms and a sequence $%
\left( \mu _{k},k\in \mathbb{N}\right) $ of a real numbers such that for
every $n\in \mathbb{N}$

\begin{equation}
\qquad \sum_{k=0}^{\infty }\mu _{k}S_{M_{n}}a_{k}=f^{\left( n\right)},
\label{6.1}
\end{equation}
and
\begin{equation*}
\qquad \sum_{k=0}^{\infty }\left\vert \mu _{k}\right\vert ^{p}<\infty .
\end{equation*}%
\textit{Moreover, }$\left\Vert f\right\Vert _{H_p(G_m)}\backsim \inf \left(
\sum_{k=0}^{\infty }\left\vert \mu_k\right\vert ^{p}\right) ^{1/p},$ where the infimum is taken over all decompositions of $f$ 
of the form (\ref{6.1}).
\end{proposition}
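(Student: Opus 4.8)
This is the classical atomic decomposition of the martingale Hardy spaces; I would prove it as two separate implications, the only substantial work lying in one of them. \emph{Sufficiency.} Suppose $f^{(n)}=\sum_k\mu_kS_{M_n}a_k$ with $\sum_k|\mu_k|^p<\infty$, each $a_k$ a $p$-atom supported on an interval $I_k$. The key observation is that for a single $p$-atom $a$ with $\text{supp}\,a\subset I$ and $\mu(I)=M_N^{-1}$, the sequence $(S_{M_n}a:n\in\mathbb N)$ coincides with the conditional expectations $(E_na)$, so $S_{M_n}a\equiv0$ for $n\le N$ (because $\int_Ia\,d\mu=0$ and $I$ lies in a single atom of $\digamma_n$), while for $n>N$ one has $S_{M_n}a=0$ off $I$ and $|S_{M_n}a|=|E_na|\le\|a\|_\infty\le\mu(I)^{-1/p}$ on $I$. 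Hence the maximal function of the atom satisfies $a^\ast\le\mu(I)^{-1/p}\mathbf 1_I$, so $\|a^\ast\|_p\le1$. Since $f^\ast\le\sum_k|\mu_k|a_k^\ast$ and $0<p\le1$, the $p$-subadditivity of $\|\cdot\|_p^p$ gives $\|f\|_{H_p}^p=\|f^\ast\|_p^p\le\sum_k|\mu_k|^p$, and taking the infimum over decompositions yields $\|f\|_{H_p}\le c\bigl(\inf\sum_k|\mu_k|^p\bigr)^{1/p}$.

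\emph{Necessity.} Given $f\in H_p$ with maximal function $f^\ast\in L_p$, I would run the stopping-time construction adapted to the dyadic structure of $G_m$. Writing $f_n^\ast:=\max_{m\le n}|f^{(m)}|$, set for $k\in\mathbb Z$
\[
\tau_k:=\inf\{n\in\mathbb N:f_{n+1}^\ast>2^k\}\qquad(\inf\varnothing:=\infty),
\]
a stopping time whose ``look-ahead'' is chosen precisely so that the stopped martingale $f^{\tau_k}$, given by $(f^{\tau_k})^{(n)}=f^{(\tau_k\wedge n)}$, is bounded by $2^k$ (even though individual martingale jumps are not controlled by the threshold). Then $g^k:=f^{\tau_{k+1}}-f^{\tau_k}$ is bounded by $3\cdot2^k$, its maximal function is supported on $\{f^\ast>2^k\}$, and $\sum_{k\in\mathbb Z}g^{k,(n)}=f^{(n)}$ for each $n$ (legitimate since $f^\ast<\infty$ a.e.; the constant $f^{(0)}=\widehat f(0)$, which the atomic conditions anyway force to vanish, is taken to be $0$). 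One then covers the support of $g^k$ by a disjoint union $\bigsqcup_iQ_{k,i}$ of carefully chosen maximal Vilenkin intervals over which $g^k$ has vanishing integral, and puts $\mu_{k,i}:=3\cdot2^k\mu(Q_{k,i})^{1/p}$ and $a_{k,i}:=\mathbf 1_{Q_{k,i}}g^k/\mu_{k,i}$. Each $a_{k,i}$ is then a $p$-atom, $\sum_{k,i}\mu_{k,i}S_{M_n}a_{k,i}=f^{(n)}$, and --- using that $G_m$ is a \emph{bounded} Vilenkin group, so that each $Q_{k,i}$ meets $\{f^\ast>2^k\}$ in a fixed proportion of its measure --- $\sum_i\mu(Q_{k,i})\le c\,\mu(\{f^\ast>2^k\})$; hence $\sum_{k,i}|\mu_{k,i}|^p\le c\sum_k2^{kp}\mu(\{f^\ast>2^k\})\le c\|f^\ast\|_p^p=c\|f\|_{H_p}^p$ by the usual layer-cake comparison, giving $\bigl(\inf\sum|\mu_k|^p\bigr)^{1/p}\le c\|f\|_{H_p}$.

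The genuinely delicate part is this second direction: the stopping times and the intervals $Q_{k,i}$ must be chosen so that each $\mathbf 1_{Q_{k,i}}g^k$ is simultaneously bounded by the right multiple of $\mu(Q_{k,i})^{-1/p}$ --- which is what forces the ``look-ahead'' in $\tau_k$ --- and has zero integral over $Q_{k,i}$ --- which is what forces taking maximal intervals over which $g^k$ has already vanished up to the relevant level --- and then one must justify that the resulting double series converges to $f$ in the martingale sense. All of this is carried out in Weisz \cite{We4}; the remaining ingredients, the maximal estimate for a single atom in the first part and the comparison of $\sum_k2^{kp}\mu(\{f^\ast>2^k\})$ with $\|f^\ast\|_p^p$, are routine.
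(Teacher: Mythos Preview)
The paper does not actually prove this proposition: it is stated as a known lemma of Weisz, with the phrase ``for details see e.g.\ Weisz \cite{We4}'' in place of a proof. Your sketch is precisely the standard stopping-time argument carried out in that reference, so there is nothing to compare against in the paper itself; your approach \emph{is} the one the paper defers to.

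A couple of minor remarks on the sketch. First, your handling of the constant term is slightly off: in the formulation here atoms always have mean zero, so the decomposition as stated only captures martingales with $f^{(0)}=0$; in Weisz's treatment the constant is either absorbed by convention or handled as a separate (trivial) term, and you should say which you are doing rather than asserting that ``the atomic conditions anyway force [it] to vanish''. Second, the look-ahead stopping time $\tau_k=\inf\{n:f^\ast_{n+1}>2^k\}$ is one correct way to get $|f^{(\tau_k\wedge n)}|\le 2^k$, but the way the argument is usually run for bounded Vilenkin groups (equivalently, regular martingales) is to use the regularity inequality $|f^{(n)}|\le c\,E_{n-1}f^\ast$, which makes the ordinary level sets $\{f^\ast>2^k\}$ predictable and avoids the look-ahead; either route works, and you correctly flag that boundedness of $(m_n)$ is where it enters. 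None of this affects the validity of your outline.
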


By using atomic characterization (see Proposition \ref{atomicdec}) it can be easily proved that the following
statement holds (see e.g. Weisz  \cite{We3}):

\begin{proposition}\label{lemma2.2} Suppose that an operator $T$ is sub-linear and for some $0<p_0\leq 1$
\begin{equation*}
\int\limits_{\overset{-}{I}}\left\vert Ta\right\vert ^{p_0}d\mu \leq
c_{p}<\infty
\end{equation*}%
for every $p_0$-atom  $a$, where $I$ denotes the support of the atom. If $T$ is bounded from $L_{p_1}$  to $L_{p_1},$ $\left(1<p_{1}\leq \infty\right) $, then
\begin{equation} \label{p_0}
\left\Vert Tf\right\Vert _{p_0}\leq c_{p_0}\left\Vert f\right\Vert _{H_{p_0}(G_m)}.
\end{equation}
\end{proposition}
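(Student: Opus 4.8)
The plan is to use the atomic decomposition of Proposition \ref{atomicdec} to reduce the asserted $H_{p_0}$-estimate to a \emph{uniform} bound for $T$ applied to a single $p_0$-atom, and then to reassemble the atomic estimates by exploiting that the map $t\mapsto t^{p_0}$ is subadditive for $0<p_0\le 1$.

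First I would fix $f\in H_{p_0}(G_m)$ and, by Proposition \ref{atomicdec}, write $f=\sum_{k=0}^{\infty}\mu_k a_k$ in the sense of (\ref{6.1}) with each $a_k$ a $p_0$-atom supported on an interval $I_k$ and $\sum_k|\mu_k|^{p_0}\le c\|f\|_{H_{p_0}(G_m)}^{p_0}$. The core step is to prove that $\int_{G_m}|Ta_k|^{p_0}\,d\mu\le c$ with $c$ independent of $k$. Splitting this integral over $\overline{I_k}$ and over $I_k$, the part over $\overline{I_k}$ is exactly the hypothesis $\int_{\overline{I}}|Ta|^{p_0}\,d\mu\le c_p$. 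For the part over $I_k$, when $p_1<\infty$ I would apply H\"older's inequality with exponent $p_1/p_0>1$,
\[
\int_{I_k}|Ta_k|^{p_0}\,d\mu\le\Big(\int_{G_m}|Ta_k|^{p_1}\,d\mu\Big)^{p_0/p_1}\mu(I_k)^{1-p_0/p_1},
\]
then use the boundedness $\|Ta_k\|_{p_1}\le\|T\|_{L_{p_1}\to L_{p_1}}\|a_k\|_{p_1}$ together with the atom estimate $\|a_k\|_{p_1}\le\|a_k\|_{\infty}\mu(I_k)^{1/p_1}\le\mu(I_k)^{-1/p_0+1/p_1}$; the powers of $\mu(I_k)$ then cancel exactly and leave a constant $\le\|T\|_{L_{p_1}\to L_{p_1}}^{p_0}$. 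The case $p_1=\infty$ is even simpler: $\int_{I_k}|Ta_k|^{p_0}\,d\mu\le\|Ta_k\|_{\infty}^{p_0}\mu(I_k)\le\|T\|_{L_{\infty}\to L_{\infty}}^{p_0}\|a_k\|_{\infty}^{p_0}\mu(I_k)\le\|T\|_{L_{\infty}\to L_{\infty}}^{p_0}$.

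With the uniform bound $\int_{G_m}|Ta_k|^{p_0}\,d\mu\le c$ in hand, I would use sublinearity of $T$ to obtain the pointwise inequality $|Tf|\le\sum_k|\mu_k|\,|Ta_k|$, and then, since $0<p_0\le 1$, subadditivity of $t\mapsto t^{p_0}$ gives $|Tf|^{p_0}\le\sum_k|\mu_k|^{p_0}|Ta_k|^{p_0}$. Integrating term by term yields
\[
\|Tf\|_{p_0}^{p_0}\le\sum_k|\mu_k|^{p_0}\int_{G_m}|Ta_k|^{p_0}\,d\mu\le c\sum_k|\mu_k|^{p_0}\le c'\|f\|_{H_{p_0}(G_m)}^{p_0},
\]
which is precisely (\ref{p_0}).

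The one delicate point, which I expect to be the main obstacle requiring care rather than ingenuity, is justifying the passage $|Tf|\le\sum_k|\mu_k|\,|Ta_k|$ for a \emph{martingale} $f$ (and the interchange of $T$ with the infinite sum): rigorously one works with the finite partial sums $f^{(n)}=\sum_k\mu_k S_{M_n}a_k$, applies $T$ there---using that the operators of interest here are given by convolution with integrable kernels so that $T$ is well defined on $L_1(G_m)$ and commutes suitably with these sums---and then passes to the limit while controlling the tail $\sum_{k>N}$ in the relevant quasi-norm. For the concrete operators treated in this paper this is standard, so I would only indicate it briefly.
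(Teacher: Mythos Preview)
Your argument is correct and is precisely the standard proof of this result; note that the paper does not give its own proof of Proposition~\ref{lemma2.2} but merely states it with a reference to Weisz~\cite{We3}, whose argument is exactly the atomic-decomposition scheme you describe (split $\int_{G_m}|Ta|^{p_0}$ over $I$ and $\overline{I}$, use $L_{p_1}$-boundedness and H\"older on $I$, the hypothesis on $\overline{I}$, then reassemble via $|\cdot|^{p_0}$-subadditivity). Your caveat about justifying $|Tf|\le\sum_k|\mu_k||Ta_k|$ at the martingale level is appropriate and is handled in the cited references in the standard way you indicate.
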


The definition of classical Hardy spaces and real Hardy spaces and related theorems of atomic decompositions of these spaces can be found e.g. in Fefferman and Stein \cite{FS} (see also Later \cite{La}, Torchinsky \cite{Tor1}, Wilson \cite{Wil}).

\section{Main Results}

Our first main result reads:
\begin{theorem} \label{threisz_2} Let $0<p<1/2$ and $f\in H_p(G_m).$ Then there exists an absolute constant $c_{p},$ depending only on $p,$ such that the inequality
\begin{equation} \label{star}
\overset{\infty}{\underset{n=1}{\sum}}
\frac{\log^p n \left\Vert R^\psi_nf\right\Vert _{H_p(G_m)}^p}{n^{2-2p}}\leq
c_p\left\Vert f\right\Vert_{H_p(G_m)}^p
\end{equation}
holds, where $R^\psi_{n}f$ denotes the $n$-th Reisz logarithmic mean with respect to the Vilenkin-Fourier series of $f.$ 
\end{theorem}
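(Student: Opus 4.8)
The plan is to use the atomic decomposition of $H_p(G_m)$ from Proposition \ref{atomicdec} together with the sharpness-type estimate for the maximal operator $\widetilde R_p^{\psi,\ast}$ recalled in the introduction, namely the boundedness of $\sup_n \log(n+1)|R_n^\psi|/((n+1)^{1/p-2}\log^{2[1/2+p]}(n+1))$ from $H_p$ to $L_p$; since $0<p<1/2$ we have $[1/2+p]=0$, so that operator is simply $\sup_n \log(n+1)|R_n^\psi|/(n+1)^{1/p-2}$, which is exactly $\widetilde R_p^{\psi,\ast}$ as defined in Section 2. First I would reduce \eqref{star} to an estimate on a single $p$-atom. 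By the atomic decomposition, writing $f=\sum_k\mu_k a_k$ with $\sum_k|\mu_k|^p\lesssim\|f\|_{H_p}^p$, and using $p$-subadditivity of the (quasi-)norm $\|\cdot\|_{H_p}^p$ and of the operator $R_n^\psi$, it suffices to show
\begin{equation*}
\sum_{n=1}^\infty \frac{\log^p n\,\|R_n^\psi a\|_{H_p(G_m)}^p}{n^{2-2p}}\le c_p
\end{equation*}
for every $p$-atom $a$ supported on an interval $I=I_N$ (one may assume $I$ is centered at zero by translation invariance). Here I would note that $R_n^\psi a$ is itself a function in $L_p\subset H_p$ and $\|g\|_{H_p}$ for a genuine function is comparable to $\|g^\ast\|_p$; in fact for these means one typically bounds $\|R_n^\psi a\|_{H_p}$ by controlling the full maximal function, or one works with $\|R_n^\psi a\|_p$ and then upgrades — the paper's earlier framework (Proposition \ref{lemma2.2}) suggests this is routine.

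The core of the argument is to split the sum over $n$ according to the size of $n$ relative to $M_N$. For the small terms $n\le M_N$: since $a$ has mean zero on $I_N$ and $S_k a=0$ whenever $k\le M_N$ (because $D_k$ is $\digamma_N$-measurable there and the atom integrates to zero against each such Vilenkin function... more precisely $S_{M_N}a=0$ so $\widehat a(j)=0$ for $j<M_N$), we get $R_n^\psi a=0$ for all $n\le M_N$. Wait — one must be careful: $\widehat a(j)=0$ for $j<M_N$ gives $S_k^\psi a=0$ for $k\le M_N$, hence $R_n^\psi a=\frac1{l_n}\sum_{k=1}^n S_k^\psi a/k=0$ for $n\le M_N$. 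So only $n>M_N$ contributes. For the large terms $n>M_N$ I would use the pointwise kernel estimates for $L_n^\psi$ that are standard in this area (and used in \cite{PTW1}, \cite{tep2}): on the set $\overline{I_N}$ one has good decay, and combining with $\|a\|_\infty\le M_N^{1/p}$ and the decomposition \eqref{3} of $\overline{I_N}$ one estimates $\|R_n^\psi a\|_{H_p}^p$, typically obtaining a bound of the form $c_p M_N^{?}\,n^{?}\,\log^{?}n$ times $\mu(I_N)$-type factors. Plugging this into $\sum_{n>M_N}\log^p n/n^{2-2p}$ and summing the geometric-type series in $N$ (the tail $\sum_{n>M_N}n^{2p-2}\log^p n\sim M_N^{2p-1}\log^p M_N$ since $2p-2<-1$) should produce exactly the cancellation of the $M_N$ powers, leaving a constant independent of the atom.

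The main obstacle I anticipate is establishing the right pointwise/integral bound for $\|R_n^\psi a\|_{H_p}$ (equivalently for the maximal function of $R_n^\psi a$) on $\overline{I_N}$ with the correct power of $n$ and the correct single logarithmic factor $\log n$ — one extra or missing power of $\log$ would break the balance with the $\log^p n$ weight in \eqref{star}. This is precisely where the improved estimate for $\widetilde R_p^{\psi,\ast}$ (which has an extra $\log(n+1)$ in the numerator compared to the Fej\'er operator $\widetilde\sigma_p^{\psi,\ast}$) must be exploited: applying that operator's $H_p\to L_p$ boundedness to the atom $a$ gives $\||R_n^\psi a|\log(n+1)/(n+1)^{1/p-2}\|_p\le c_p$ uniformly, i.e. $\|R_n^\psi a\|_p\le c_p (n+1)^{1/p-2}/\log(n+1)$, and then I would interpolate/combine this uniform bound with the trivial bound $\|R_n^\psi a\|_{H_p}\le \|a\|_{H_p}\le 1$ to get a bound that decays fast enough in $n$ for $n$ large while the logarithmic factors align. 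A clean way: for $n$ in the dyadic block $M_N\le n<M_{N+1}$ the relevant quantity $\log^p n\,\|R_n^\psi f\|_{H_p}^p/n^{2-2p}$ summed over such $n$ is controlled using the maximal-operator estimate applied blockwise, and summing over blocks $N$ converges because the weight $n^{2p-2}$ is summable. Verifying that the logarithmic powers cancel exactly (the $\log^p n$ in \eqref{star} against the $1/\log(n+1)$ gain, leaving no residual log divergence) is the delicate bookkeeping step, but it is essentially dictated by the sharp form of $\widetilde R_p^{\psi,\ast}$ quoted from \cite{tep2,tep3}.
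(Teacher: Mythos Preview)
Your reduction to atoms and the observation that $R_n^\psi a=0$ for $n\le M_N$ are correct and match the paper. The genuine gap is in the ``clean way'' you propose at the end. The boundedness of $\widetilde R_p^{\psi,\ast}$ from $H_p$ to $L_p$ applied to an atom $a$ (with $\|a\|_{H_p}\le c$) gives only
\[
\|R_n^\psi a\|_p^p\le c_p\,\frac{(n+1)^{1-2p}}{\log^p(n+1)},
\]
and substituting this into the sum yields
\[
\sum_{n>M_N}\frac{\log^p n}{n^{2-2p}}\|R_n^\psi a\|_p^p\le c_p\sum_{n>M_N}\frac{1}{n}=\infty.
\]
The maximal-operator estimate is sharp only for $n$ comparable to $M_N$; for $n\gg M_N$ it grossly overestimates $\|R_n^\psi a\|_p$, and taking a supremum inside the $L_p$ norm throws away exactly the decay you need. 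Your proposed fix, the ``trivial bound $\|R_n^\psi a\|_{H_p}\le\|a\|_{H_p}\le 1$'', is not available: if $R_n^\psi$ were uniformly bounded on $H_p$ for $p<1/2$, then since $\sum_n\log^p n/n^{2-2p}<\infty$ the theorem would be immediate and the paper would have nothing to prove. This is precisely the regime where Theorem~T1--type unboundedness lives, so there is nothing to interpolate with.

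What the paper actually does is the route you mention first but only vaguely. It writes $L_n^\psi=\frac{1}{l_n}\sum_{j=1}^{n-1}\frac{K_j^\psi}{j+1}+\frac{K_n^\psi}{l_n}$ via Abel transformation and then applies the pointwise Fej\'er-kernel estimates of Lemmas~\ref{l1} and~\ref{l2} on each piece $I_{l+1}(x_ke_k+x_le_l)$ and $I_N(x_ke_k)$ of the decomposition~\eqref{3} of $\overline{I_N}$. This yields the atom-adapted bound
\[
\int_{\overline{I_N}}|R_n^\psi a|^p\,d\mu\le \frac{c_p\,M_N^{1-2p}}{\log^p(n+1)}+c_p,
\]
whose essential feature --- invisible to the maximal operator --- is that the main term carries $M_N^{1-2p}$, \emph{not} $n^{1-2p}$. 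Then
\[
\sum_{n>M_N}\frac{\log^p n}{n^{2-2p}}\cdot\frac{M_N^{1-2p}}{\log^p(n+1)}\le c_p\,M_N^{1-2p}\sum_{n>M_N}\frac{1}{n^{2-2p}}\le c_p,
\]
while the $c_p$ remainder contributes $\sum_{n>M_N}\log^p n/n^{2-2p}\le c_p$ as well. So the missing ingredient in your argument is precisely this direct, atom-localized kernel analysis; the weighted maximal operator is too coarse a substitute.
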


For the proof of Theorem \ref{threisz_2} we will use the following lemmas:

\begin{lemma} \label{l1} (see \cite{tep6}) \ \ Let $x\in I_{N}\left(x_ke_k+x_le_l\right),$ \ \ $\ 1\leq x_{k}\leq m_{k}-1,$ \ \ $1\leq x_{l}\leq m_{l}-1,$ \  $k=0,...,N-2,$ $\ l=k+1,...,N-1.$ Then	
\begin{equation*}
\int_{I_{N}}\left| K^\psi_{n}\left( x-t\right) \right| d\mu \left( t\right) \leq\frac{cM_{l}M_{k}}{nM_{N}},\qquad \text{when }n\geq M_{N}.
\end{equation*}
Let $x\in I_{N}\left( x_{k}e_{k}\right) , \ \ 1\leq x_{k}\leq m_{k}-1,$ $k=0,...,N-1.$ Then
\begin{equation*}
\int_{I_{N}}\left| K^\psi_{n}\left( x-t\right) \right| d\mu \left( t\right) \leq\frac{cM_{k}}{M_{N}},\qquad \text{when }n\geq M_{N}.
\end{equation*}
\end{lemma}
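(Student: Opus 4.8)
The plan is to reduce the stated convolution integrals to the $L^1$-mass of the Vilenkin Fej\'er kernel on a single coset, and then to exploit the explicit structure of that kernel. First I would observe that $I_N$ is a subgroup of $G_m$ satisfying $-I_N=I_N$, so the Haar-invariance of $\mu$ together with the substitution $s=x-t$ gives
\begin{equation*}
\int_{I_N}\left|K^\psi_n(x-t)\right|d\mu(t)=\int_{I_N(x)}\left|K^\psi_n(s)\right|d\mu(s),
\end{equation*}
where $I_N(x)$ is the coset of $I_N$ containing $x$. Under the hypotheses of the two cases this coset is $I_N(x_ke_k+x_le_l)$, respectively $I_N(x_ke_k)$, so the problem becomes to bound the mass of $\left|K^\psi_n\right|$ over an explicit coset that sits at a prescribed distance from the origin.

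Next I would invoke the well-known decomposition of the Vilenkin Fej\'er kernel (see \cite{tep6}, cf. \cite{AVD}, \cite{Ga1}): writing $n=\sum_i n_iM_i$ with $|n|\ge N$ (which is forced by $n\ge M_N$), one expresses $nK^\psi_n$ as a sum over the levels $A=0,\dots,|n|$ of terms of the shape (unimodular factor)$\cdot D^\psi_{M_A}$ and (unimodular factor)$\cdot M_AK^\psi_{M_A}$, with bounded multiplicities. The two elementary building blocks are then $D^\psi_{M_A}=M_A\mathbf{1}_{I_A}$ and the explicit step formula for $K^\psi_{M_A}$, which is constant on $I_A$ and, on the cosets of $I_A$ lying off $I_A$, is controlled on the level-$j$ shell by $cM_j$ through a geometric factor $|1-r_j|^{-1}$; since $G_m$ is bounded, $|1-r_j(s)|\ge c>0$ whenever $s_j\ne0$, so these factors are bounded above by an absolute constant.

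The heart of the matter is the geometric bookkeeping dictated by the coset. In Case~1 a point $s\in I_N(x_ke_k+x_le_l)$ has $s_k=x_k\ne0$ and $s_l=x_l\ne0$ with $k<l\le N-1$, hence $s\in I_A$ only for $A\le k$; consequently each $D^\psi_{M_A}$-term vanishes on the coset unless $A\le k$, where integration yields $M_A\mu(I_N)=M_A/M_N$. For the $K^\psi_{M_A}$-terms I would split the range of $A$ into $A\le k$, $k<A\le l$, and $A>l$, and on each range estimate $\int_{I_N(x_ke_k+x_le_l)}\left|K^\psi_{M_A}\right|d\mu$ from the step formula, using the uniform bounds on $|1-r_k|^{-1}$ and $|1-r_l|^{-1}$. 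Summing the resulting essentially geometric series in $A$ and dividing by $n$ produces the factor $cM_kM_l/(nM_N)$. Case~2 is the degenerate version in which only the coordinate $x_k$ is excited: there $s\in I_k\setminus I_{k+1}$, the kernel obeys the pointwise bound $\left|K^\psi_n(s)\right|\le cM_k$ on the coset, and integrating over the measure-$1/M_N$ coset gives $cM_k/M_N$ directly; no factor $1/n$ survives because a single excited coordinate produces no additional $n$-decay, whereas in Case~1 the second excited coordinate at level $l$ supplies exactly the extra gain $M_l/n$.

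The step I expect to be the main obstacle is the refined coset estimate for $\int\left|K^\psi_{M_A}\right|d\mu$ combined with the accounting of multiplicities in the kernel decomposition: one must check that, after the cancellations encoded by the Rademacher factors, the surviving contributions really do sum to a geometric series with the claimed ratio, so that the two excited coordinates contribute precisely the factors $M_k$ and $M_l$ and nothing larger. Once this estimate and the decomposition are secured, the translation reduction, the support property of $D^\psi_{M_A}$, and the lower bounds on $|1-r_j|$ are routine.
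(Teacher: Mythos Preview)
The paper does not supply its own proof of this lemma: it is quoted verbatim with the attribution ``(see \cite{tep6})'' and is used as a black box in the proof of Theorem~\ref{threisz_2}. There is therefore no in-paper argument to compare your proposal against.

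That said, your outline is the standard route taken in the cited sources. The reduction by translation to $\int_{I_N(x)}|K^\psi_n|\,d\mu$ is correct; the decomposition of $nK^\psi_n$ into a bounded number of terms of the form (unimodular)$\cdot D^\psi_{M_A}$ and (unimodular)$\cdot M_AK^\psi_{M_A}$ is exactly G\'at's formula (\cite{Ga1}, see also \cite{AVD}), and your case-splitting on $A$ relative to the excited coordinates $k$ and $l$ is precisely how the coset integrals of $K^\psi_{M_A}$ are handled in \cite{tep6} and \cite{tep7}. Your identification of the main obstacle is accurate: the nontrivial content is the explicit coset bound
\[
\int_{I_N(x_ke_k+x_le_l)}\left|K^\psi_{M_A}\right|d\mu \le \frac{c\,M_kM_l}{M_A M_N}\quad(A>l),
\]
together with the analogous estimates for $A\le k$ and $k<A\le l$, after which the summation over $A$ is a geometric series. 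One small caveat: in Case~2 your pointwise claim $|K^\psi_n(s)|\le cM_k$ on $I_N(x_ke_k)$ is correct but is itself a consequence of the same decomposition and the formula $|K^\psi_{M_A}(s)|\le cM_k$ for $s\in I_k\setminus I_{k+1}$, so it should be derived rather than asserted. With that detail filled in, your plan reproduces the argument of \cite{tep6}.
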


\begin{lemma} \label{l2} (see \cite{tep7})
Let $x\in I_{N}\left( x_{k}e_{k}+x_{l}e_{l}\right) ,$ $\ 1\leq x_{k}\leq
m_{k}-1,$ $1\leq x_{l}\leq m_{l}-1,$ $\ k=0,...,N-2,$ $l=k+1,...,N-1.$ Then
\begin{equation*}
\int_{I_{N}}\text{ }\underset{j=M_{N}+1}{\overset{n}{\sum }}\frac{\left|
K^\psi_j\left( x-t\right) \right| }{j+1}d\mu \left( t\right) \leq \frac{
cM_{k}M_{l}}{M_{N}^{2}}.
\end{equation*}
	
Let $x\in I_{N}\left( x_{k}e_{k}\right) ,$ $1\leq x_{k}\leq m_{k}-1,$ $\
k=0,...,N-1.$ Then
\begin{equation*}
\int_{I_{N}}\underset{j=M_{N}+1}{\overset{n}{\sum }}\frac{\left| K^\psi_{j}\left(x-t\right) \right| }{j+1}d\mu \left( t\right) \leq \frac{cM_{k}}{M_{N}}l_{n}.
	\end{equation*}
\end{lemma}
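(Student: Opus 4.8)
The plan is to derive both estimates directly from Lemma \ref{l1} by interchanging the (finite) sum with the integral and then summing the resulting per-term bounds. Since the sum over $j$ is finite, linearity of the integral gives
\begin{equation*}
\int_{I_N}\sum_{j=M_N+1}^n\frac{\left|K^\psi_j(x-t)\right|}{j+1}\,d\mu(t)=\sum_{j=M_N+1}^n\frac{1}{j+1}\int_{I_N}\left|K^\psi_j(x-t)\right|\,d\mu(t).
\end{equation*}
In both cases the summation index satisfies $j\geq M_N+1>M_N$, so Lemma \ref{l1} applies to each inner integral, and it remains only to estimate the resulting numerical series. (If $n\leq M_N$ the sum is empty and both assertions hold trivially.)

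First I would treat the case $x\in I_N\left(x_ke_k+x_le_l\right)$. By the first estimate in Lemma \ref{l1} each inner integral is at most $cM_kM_l/(jM_N)$, so, incorporating the weight $1/(j+1)$, the whole expression is bounded by
\begin{equation*}
\frac{cM_kM_l}{M_N}\sum_{j=M_N+1}^n\frac{1}{j(j+1)}\leq\frac{cM_kM_l}{M_N}\sum_{j=M_N+1}^\infty\left(\frac{1}{j}-\frac{1}{j+1}\right)=\frac{cM_kM_l}{M_N}\cdot\frac{1}{M_N+1}\leq\frac{cM_kM_l}{M_N^2},
\end{equation*}
where the telescoping sum furnishes the extra factor $1/M_N$. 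This is exactly the claimed bound for the first part.

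For the case $x\in I_N\left(x_ke_k\right)$ I would instead invoke the second estimate in Lemma \ref{l1}, which bounds each inner integral by $cM_k/M_N$ \emph{uniformly} in $j$, yielding
\begin{equation*}
\int_{I_N}\sum_{j=M_N+1}^n\frac{\left|K^\psi_j(x-t)\right|}{j+1}\,d\mu(t)\leq\frac{cM_k}{M_N}\sum_{j=M_N+1}^n\frac{1}{j+1}\leq\frac{cM_k}{M_N}\sum_{j=1}^n\frac{1}{j}=\frac{cM_k}{M_N}l_n,
\end{equation*}
which is the second assertion. There is no genuine obstacle here beyond bookkeeping; the only conceptual point is the contrast between the two regimes. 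In the first case Lemma \ref{l1} supplies decay $1/j$ in the index, so the weighted series converges like $\sum 1/(j(j+1))$ and produces the clean bound $M_kM_l/M_N^2$; in the second case the per-term estimate is only uniform in $j$, so the weighted sum reduces to the harmonic sum $l_n$, which is precisely why the factor $l_n$ must appear in the second inequality and cannot be removed.
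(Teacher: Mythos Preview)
Your argument is correct. The paper does not actually prove Lemma~\ref{l2}; it merely cites \cite{tep7} for it, so there is no paper proof to compare against directly. That said, the natural derivation is precisely the one you give: apply Lemma~\ref{l1} term by term after swapping the finite sum with the integral, then telescope $\sum 1/(j(j+1))$ in the first case to gain the extra $1/M_N$, while in the second case the lack of decay in $j$ leaves the harmonic tail $l_n$. Nothing is missing.
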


\begin{proof} By using Abel transformation, the kernels of the Riesz
logarithmic means can be rewritten as (see also \cite{tep7})
\begin{equation}
L^\psi_n=\frac{1}{l_{n}}\overset{n-1}{\underset{j=1}{\sum }}\frac{K^\psi_j}{j+1}+\frac{K^\psi_n}{l_{n}}.  \label{25}
\end{equation}

Hence, according to \eqref{kodala} we get that
\begin{equation*}
\sup_{n\in \mathbb{N}}\int_{G_m}\left\vert L^{\alpha}_n\right\vert d\mu \leq c<\infty, \ \ \ \ \text{where}\ \ \ \ \alpha=w\text{ or }\psi
\end{equation*}
and it follows that $R^\psi_n$ is bounded from $L_{\infty}$  to $L_{\infty}.$
By Proposition \ref{lemma2.2}, the proof of Theorem \ref{threisz_2} will be complete, if we show that
\begin{equation} \label{reiszbounded}
\overset{\infty}{\underset{n=1}{\sum}}\frac{\log^pn \int\limits_{\overset{-}{I}}\left\vert R^\psi_na\right\vert^pd\mu}{n^{2-2p}}\leq c_p<\infty,\ \ \text{for} \ \  0<p<1/2,
\end{equation}
for every $p$-atom $a,$ where $I$ denotes the support of the atom.

Let $a$ be an arbitrary p-atom with support $I$ and $\mu \left( I\right)
=M_{N}^{-1}.$ We may assume that $I=I_{N}.$ It is easy to see that $%
R^\psi_n a=\sigma^\psi_n\left(a\right)=0,$ when $n\leq M_{N}$.
Therefore we suppose that $n>M_{N}.$

Since $\left\Vert a\right\Vert _{\infty }\leq cM_{N}^{2}$ if we apply (\ref{25}), then we can conclude that
\begin{eqnarray} \label{26}
&&\left\vert R^\psi_na\left(x\right)\right\vert \\ \notag
&=&\int_{I_{N}}\left\vert a\left(t\right)\right\vert\left\vert L^\psi_n\left(x-t\right)\right\vert d\mu(t)   \\
&\leq &\left\Vert a\right\Vert _{\infty}\int_{I_{N}}\left\vert L^\psi_{n}\left( x-t\right) \right\vert d\mu \left(t\right)  \notag \\
&\leq &\frac{cM_{N}^{1/p}}{l_n}\underset{I_{N}}{\int }
\text{ }\underset{j=M_{N}+1}{\overset{n-1}{\sum }}\frac{\left\vert
K^\psi_{j}\left( x-t\right) \right\vert }{j+1}d\mu \left( t\right)  \notag \\
&+&\frac{cM_N^{1/p}}{l_n}\int_{I_N}\left\vert
K^\psi_n\left(x-t\right)\right\vert d\mu\left(t\right).  \notag
\end{eqnarray}

Let $x\in I_{N}\left( x_{k}e_{k}+x_{l}e_{l}\right) ,$ $\ 1\leq x_{k}\leq
m_{k}-1,$ $1\leq x_{l}\leq m_{l}-1,$ $k=0,...,N-2,$ $l=k+1,...,N-1.$ From
Lemmas \ref{l1} and \ref{l2} it follows that
\begin{equation} \label{29}
\left| R^\psi_{n}a\left( x\right)\right| \leq\frac{cM_{l}M_{k}M_{N}^{1/p-2}}{\log(n+1)}.  
\end{equation}

Let $x\in I_{N}\left( x_{k}e_{k}\right) ,$ $1\leq x_{k}\leq m_{k}-1,$ $%
k=0,...,N-1.$ Applying Lemmas \ref{l1} and \ref{l2} we can conclude that
\begin{equation} \label{32}
\left\vert R^\psi_{n}a\left(x\right)\right\vert\leq{M_{N}^{1/p-1}M_{k}}.  
\end{equation}

By combining (\ref{3}) and (\ref{26}-\ref{32}) we obtain that
\begin{eqnarray} \label{7aaa}
&&\int_{\overline{I_{N}}}\left\vert R^\psi_na\left( x\right) \right\vert
^{p}d\mu \left( x\right) 
\end{eqnarray}   
\begin{eqnarray*}
&=&\overset{N-2}{\underset{k=0}{\sum}}\overset{N-1}{\underset{l=k+1}{\sum}}\sum\limits_{x_j=0,j\in\{l+1,...,N-1}^{m_{j-1}}\int_{I_N^{k,l}}\left\vert R^\psi_na \right\vert^{p}d\mu+\overset{N-1}{\underset{k=0}{\sum}} \int_{I_N^{k,N}}\left\vert R^\psi_na\right\vert^pd\mu   \\ \notag
&\leq&c\overset{N-2}{\underset{k=0}{\sum }}\overset{N-1} {\underset{l=k+1}{\sum }}\frac{m_{l+1}\dots m_{N-1}}{M_{N}}\frac{\left( M_{l}M_{k}\right)^{p}M_{N}^{1-2p}}{{\log}^{p}(n+1)}+\overset{N-1}{\underset{k=0}{\sum }}\frac{1}{M_{N}}M_{k}^{p}M_{N}^{1-p}  \\
&\leq&\frac{cM_N^{1-2p}}{{\log^p(n+1)}}\overset{N-2}{\underset{k=0}{\sum}}\overset{N-1}{\underset{l=k+1}{\sum}}\frac{\left(M_lM_k\right)^p}{M_l}+\overset{N-1}{\underset{k=0}{\sum}}\frac{M_{k}^{p}}{M_{N}^p}  \notag \\
&=&\frac{cM_N^{1-2p}}{\log^p(n+1)}\overset{N-2}{\underset{k=0}{\sum}}\frac{1}{M_k^{1-2p}}\overset{N-1}{\underset{l=k+1}{\sum}}\frac{M_k^{1-p}}{M_l^{1-p}}+\overset{N-1}{\underset{k=0}{\sum}}\frac{M_k^p}{M_{N}^p} \notag \\ &\leq&\frac{cM_N^{1-2p}}{\log^p(n+1)}+c_p.  \notag
\end{eqnarray*}

It is easy to see that
\begin{equation} \label{6aaa}
\overset{\infty}{\underset{n=M_{N}+1}{\sum }}\frac{1}{n^{2-2p}}\leq \frac{c}{M_{N}^{1-2p}},\text{ for }0<p< 1/2.
\end{equation}

By combining (\ref{7aaa}) and (\ref{6aaa}) we get that
\begin{eqnarray*}
&&\overset{\infty}{\underset{n=M_N+1}{\sum}}\frac{\log^p n\int_{\overline{I_N}}\left\vert R_na\right\vert ^{p}d\mu}{n^{2-2p}}  \\
&\leq &\overset{\infty}{\underset{n=M_N+1}{\sum}}\left( \frac{c_pM_N^{1-2p}}{n^{2-p}}+\frac{c_p}{n^{2-p}}\right)+c_p \\
&\leq&c_pM_N^{1-2p}\overset{\infty}{\underset{n=M_{N}+1}{\sum}}\frac{1}{n^{2-2p}}+\overset{\infty}{\underset{n=M_N+1}{\sum }}\frac{1}{n^{2-p}}+c_p\leq C_p<\infty.
\end{eqnarray*}
It means that (\ref{reiszbounded}) holds true and the proof is complete.
\end{proof}

Our next main result shows in particular that the inequality in Theorem \ref{threisz_2} is in a special sense sharp at least in the case of Walsh-Fourier series (c.f. also problem 2 in the next section).
\begin{theorem} \label{reisz_negative_th_2}
Let $0<p<1/2$ and $\Phi :\mathbb{N}\rightarrow \lbrack 1,\infty )$ be any non-decreasing function, satisfying the condition
\begin{equation}\label{dir222}
\underset{n\rightarrow \infty }{\lim }\Phi \left( n\right) =+\infty .
\end{equation}
Then there exists a martingale $f\in H_p\left(G_2\right) $ such that
\begin{equation}\label{theoremjemala}
\sum_{n=1}^{\infty}\frac{\log^p n
\left\Vert R^w_nf\right\Vert_p^p\Phi\left(n\right)}{n^{2-2p}} =\infty,
\end{equation}
where $R^w_{n}f$ denotes the $n$-th Reisz logarithmic means with respect to Walsh-Fourier series of
$f.$ 
\end{theorem}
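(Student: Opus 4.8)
The plan is to construct the martingale $f$ explicitly as a sum of suitably scaled $p$-atoms supported on small Vilenkin (dyadic) intervals, following the standard counterexample technique used to prove sharpness in Theorems T1, in \cite{tep1}, \cite{tep6} and the analogous negative result \eqref{cond2} for the maximal operator. Concretely, I would pick a rapidly increasing sequence of indices $\{n_k\}$ (chosen so that $\Phi(M_{n_k})$ grows, but slowly relative to the geometric scales $M_{n_k}$), set
\begin{equation*}
f=\sum_{k=1}^{\infty}\lambda_k a_k,\qquad \lambda_k=\frac{1}{\Phi^{1/(2p)}(M_{n_k})},
\end{equation*}
where each $a_k$ is a $p$-atom of the form $a_k = M_{n_k}^{1/p-1}\bigl(D_{M_{n_k+1}}-D_{M_{n_k}}\bigr)$ (or a similar normalized Dirichlet-kernel difference) supported on $I_{n_k}$. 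Because $\Phi(n)\to\infty$, choosing $n_k$ sparse enough makes $\sum_k|\lambda_k|^p<\infty$, so by Proposition \ref{atomicdec} we get $f\in H_p(G_2)$ with controlled norm; moreover the partial sums $S_{M_n}f$ are genuine martingale differences, so $f$ is well defined.

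Next I would compute, or rather lower-bound, $\|R^w_n f\|_p$ for a judiciously chosen value $n = \widetilde n_k$ slightly below (or comparable to) $M_{n_k+1}$ but large compared to $M_{n_k}$, so that the $k$-th term of the series $f$ contributes a large Riesz-logarithmic mean. The key computational fact, which is where the interesting estimates live, is that for such $n$ the Walsh Riesz kernel $L^w_n$ restricted to $I_{n_k}$ behaves like $c/\log n$ times a kernel of size $\sim M_{n_k}$ on a set of measure $\sim 1/M_{n_k}$; this yields a lower bound of the form
\begin{equation*}
\bigl|R^w_{\widetilde n_k}a_k(x)\bigr|\geq \frac{c\,M_{n_k}^{1/p-1}}{\log \widetilde n_k}\quad\text{on a set of measure}\ \geq \frac{c}{M_{n_k}},
\end{equation*}
and hence $\|R^w_{\widetilde n_k} a_k\|_p^p \geq c\, M_{n_k}^{1-2p}/\log^p \widetilde n_k$. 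Since the supports $I_{n_k}$ of the $a_j$ are nested but the $R^w_{\widetilde n_k}a_j$ for $j\neq k$ can be controlled (either they vanish, when $\widetilde n_k\leq M_{n_j}$, or they are negligibly small), one gets $\|R^w_{\widetilde n_k} f\|_p^p\geq c\,|\lambda_k|^p M_{n_k}^{1-2p}/\log^p\widetilde n_k$.

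Finally I would substitute this lower bound into the series \eqref{theoremjemala}, keeping only the single term $n=\widetilde n_k$: since $\widetilde n_k\asymp M_{n_k}$ (so $\log \widetilde n_k \asymp \log M_{n_k}$ and the $\log^p$ factors cancel against those in the denominator up to constants), the $k$-th term is bounded below by
\begin{equation*}
\frac{\log^p \widetilde n_k\cdot c|\lambda_k|^p M_{n_k}^{1-2p}\Phi(\widetilde n_k)}{\log^p \widetilde n_k\cdot \widetilde n_k^{2-2p}}\geq c\,|\lambda_k|^p\,\Phi(M_{n_k})=c\,\frac{\Phi(M_{n_k})}{\Phi^{1/2}(M_{n_k})}=c\,\Phi^{1/2}(M_{n_k})\to\infty,
\end{equation*}
so the series diverges. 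The main obstacle, and the step demanding the most care, is the sharp two-sided control of the Walsh Riesz-logarithmic kernel $L^w_n$ on the small interval $I_{n_k}$ for the critical index $n = \widetilde n_k$: one needs a genuine \emph{lower} bound (not just the upper bounds of Lemmas \ref{l1}, \ref{l2}), which typically requires exploiting the explicit structure of the Walsh--Dirichlet kernels $D_{2^A}$ and a careful choice of $\widetilde n_k$ (e.g. $\widetilde n_k = M_{n_k} + M_{n_k-1}+\cdots$ of a specific form) so that the oscillating terms in $L^w_{\widetilde n_k}$ add up constructively rather than cancel; this is also why the result is stated only for the Walsh system, where these kernel identities are cleanest.
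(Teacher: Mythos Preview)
Your construction of the martingale $f=\sum_k\lambda_k a_k$ with $\lambda_k=\Phi^{-1/(2p)}(M_{n_k})$ and $a_k=M_{n_k}^{1/p-1}(D_{2M_{n_k}}-D_{M_{n_k}})$ is exactly what the paper does, and the idea of isolating the contribution of the $k$-th atom to $R^w_n f$ for $n$ in the block $[M_{n_k},2M_{n_k})$ is also right. However, the final step of your argument contains a fatal arithmetic slip that cannot be repaired by a better pointwise estimate.

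You keep only \emph{one} index $n=\widetilde n_k\asymp M_{n_k}$ and claim
\[
\frac{c|\lambda_k|^p\,M_{n_k}^{1-2p}\,\Phi(\widetilde n_k)}{\widetilde n_k^{\,2-2p}}\ \geq\ c|\lambda_k|^p\,\Phi(M_{n_k}).
\]
But $M_{n_k}^{1-2p}/\widetilde n_k^{\,2-2p}\asymp M_{n_k}^{1-2p}/M_{n_k}^{2-2p}=M_{n_k}^{-1}$, so the single term is only of order $\Phi^{1/2}(M_{n_k})/M_{n_k}$, which tends to $0$ in the regime $\Phi(n)=o(n)$ (a case that must be covered). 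No sharpening of the lower bound can save this: since $\|a_k\|_\infty\le M_{n_k}^{1/p}$ and $\|L^w_n\|_1\le c$, one always has $\|R^w_n a_k\|_p^p\le cM_{n_k}$, and inserting this best-possible bound still leaves a single term of size at most $c\,\Phi^{1/2}(M_{n_k})\log^p(M_{n_k})/M_{n_k}^{1-2p}$, which need not diverge.

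The missing idea, and the core of the paper's argument, is that one must sum over \emph{many} indices $n$ in the block, roughly $cM_{n_k}$ of them, to recover the lost factor $M_{n_k}$. Concretely, the paper fixes the set $I_2(e_0+e_1)$ of measure $1/4$ (independent of $k$) and shows that for \emph{every} $j$ in the class
\[
\mathbb{A}_{0,2}=\Bigl\{j=2^0+2^2+\textstyle\sum_{i\ge3}2^{n_i}\Bigr\},\qquad 2^{2\alpha_k}<j<2^{2\alpha_k+1},
\]
one has $|R^w_j f(x)|\ge c\,M_{n_k}^{1/p-2}/\bigl(\alpha_k\,\Phi^{1/(2p)}(M_{n_k})\bigr)$ on that fixed set. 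This uses the explicit Walsh identities $D^w_n(x)=w_n(x)$ for odd $n$ and $w_{4m+2}=-w_{4m}$ on $I_2(e_0+e_1)$, which produce a telescoping alternating sum in $II_2$ with a controllable remainder; the restriction to $\mathbb{A}_{0,2}$ is precisely what makes this work. There are $\sim 2^{2\alpha_k-2}$ such $j$, and summing their contributions yields the required $\Phi^{1/2}(M_{n_k})\to\infty$.

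A secondary point: your pointwise claim (value $\sim M_{n_k}^{1/p-1}/\log n$ on a set of measure $\sim 1/M_{n_k}$) does not match what actually happens, and in fact gives the wrong $L_p$ norm: it yields $\|R^w_{\widetilde n_k}a_k\|_p^p\gtrsim M_{n_k}^{-p}/\log^p$ rather than your stated $M_{n_k}^{1-2p}/\log^p$. The correct picture is a value of order $M_{n_k}^{1/p-2}/\log n$ on a set of \emph{constant} measure; this is what delivers $M_{n_k}^{1-2p}/\log^p$ and, more importantly, is uniform over the whole family of indices $j\in\mathbb{A}_{0,2}$ in the block, which is what allows the summation.
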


\begin{proof}
It is evident that if we assume that $\Phi\left( n\right)\geq cn,$ where $c$ is some positive constant then 
\begin{equation*}
\frac{\log^p n\Phi\left(n\right)}{n^{2-2p}}\geq n^{1-2p}\log^p n \rightarrow \infty, \ \text{as} \ n\rightarrow \infty,
\end{equation*}
and also \eqref{theoremjemala} holds. So, without lost the generality we may assume that 	there exists an increasing sequence of positive integers
$\left\{ \alpha^{\prime} _{k}:k\in \mathbb{N} \right\} $ such that
\begin{equation} \label{3jemala}
\Phi\left( \alpha^{\prime} _{k}\right)=o(\alpha^{\prime} _{k}), \ \text{as} \ k\rightarrow \infty.
\end{equation}
	
Let $\left\{ \alpha _{k}:k\in \mathbb{N} \right\}\subseteq \left\{ \alpha^{\prime} _{k}:k\in \mathbb{N} \right\}$ be an increasing sequence of positive integers such that $\alpha _{0}\geq 2$ and
\begin{equation}
\sum_{k=0}^{\infty }\frac{1}{\Phi^{1/2}(2^{2\alpha_k})}<\infty,  \label{3a}
\end{equation}
	
\begin{equation} \label{4}
\sum_{\eta =0}^{k-1}\frac{2^{2\alpha _{\eta }/p}} {\Phi^{1/2p}(2^{2\alpha_\eta})}\leq \frac{2^{2\alpha _{k-1}/p+1}}{\Phi^{1/2p}(2^{2\alpha_{k-1}})}, 
\end{equation}
	
\begin{equation} \label{5}
\frac{2^{2\alpha _{k-1}/p+1}}{\Phi^{1/2p}(2^{2\alpha_{k-1}})}\leq \frac{1}{128\alpha_k}\frac{2^{2\alpha _k(1/p-2)}}{\Phi^{1/2p}(2^{2\alpha_k})}.  
\end{equation}
	
We note that under condition \eqref{3jemala} we can conclude that
$$\frac{2^{2\alpha _{\eta }/p}} {\Phi^{1/2p}(2^{2\alpha_\eta})}
\geq
{\left(\frac{2^{2\alpha _{\eta }}} {\Phi(2^{2\alpha_\eta})}\right)}^{1/2p} \rightarrow \infty, \ \text{as} \ \eta\rightarrow \infty$$
and it immediately follows that such an increasing sequence $\left\{ \alpha_k:k\in \mathbb{N}\right\} ,$ which satisfies conditions (\ref{3a})-(\ref{5}), can be constructed.
	
Let 
\begin{equation*}
f^{\left( A\right) }\left( x\right) :=\sum_{\left\{ k;\text{ }2\alpha_{k}<A\right\} }\lambda _{k}a_{k},
\end{equation*}
where 
\begin{equation*}
\lambda_k=\frac{1}{\Phi^{1/2p}(2^{2\alpha_k})}
\end{equation*}
and
\begin{equation*}
a_{k}={2^{2\alpha _{k}(1/p-1)}}\left( D_{2^{2\alpha _{k}+1}}-D_{2^{2\alpha_{k}}}\right) .
\end{equation*}
	
From (\ref{3a}) and Lemma \ref{atomicdec} we can conclude that $f=\left( f^{\left(n\right) },n\in \mathbb{N} \right) \in H_p(G_2).$
	
It is easy to show that
\begin{equation} \label{8}
\widehat{f}^w(j)=\left\{ 
\begin{array}{l}
\frac{2^{2\alpha _{k}(1/p-1)}}{\Phi^{1/2p}(2^{2\alpha_k})},\,\,\text{ if \thinspace\thinspace }j\in \left\{ 2^{2\alpha _{k}},...,\text{ ~}2^{2\alpha_{k}+1}-1\right\} ,\text{ }k\in \mathbb{N}, \\ 
0,\text{ \thinspace \thinspace \thinspace if \thinspace \thinspace
\thinspace }j\notin \bigcup\limits_{k=1}^{\infty }\left\{ 2^{2\alpha
_k},...,\text{ ~}2^{2\alpha _{k}+1}-1\right\}.
\end{array}\right.  
	\end{equation}
	
For $n=\sum_{i=1}^s 2^{n_i},$ $n_1<n_2<...<n_s$ we denote
\begin{equation*}
\mathbb{A}_{0,2}:=\left\{ n\in \mathbb{N}:\text{ }n=2^0+2^2+
\sum_{i=3}^{s_n}2^{n_i}\right\}.
\end{equation*}
	
Let $2^{2\alpha _{k}}\leq j\leq 2^{2\alpha_{k}+1}-1$ and $j\in \mathbb{A}_{0,2}.$ Then 
\begin{eqnarray} \label{10a}
R^w_jf &=&\frac{1}{l_j}\sum_{n=1}^{2^{2\alpha_k}-1}\frac{S_nf}{n} +\frac{1}{l_j}\sum_{n=2^{2\alpha_k}}^{j}\frac{S_{n}f}{n}:=I+II. 
\end{eqnarray}
	
Let $n<2^{2\alpha _{k}}.$ Then from (\ref{4}), (\ref{5}) and (\ref{8}) we have that
\begin{eqnarray*}
\left\vert S^w_{n}f\left( x\right) \right\vert 
&\leq &\sum_{\eta =0}^{k-1}\sum_{v=2^{2\alpha _{\eta }}}^{2^{2\alpha _{\eta}+1}-1}\left\vert \widehat{f}^w(v)\right\vert \leq \sum_{\eta
=0}^{k-1}\sum_{v=2^{2\alpha _{\eta }}}^{2^{2\alpha _{\eta }+1}-1}\frac{%
2^{2\alpha _{\eta }(1/p-1)}}{\Phi^{1/2p}(2^{2\alpha_\eta})} \\
&\leq &\sum_{\eta =0}^{k-1}\frac{2^{2\alpha _{\eta }/p}} {\Phi^{1/2p}(2^{2\alpha_\eta})}\leq \frac{2^{2\alpha _{k-1}/p+1}}{\Phi^{1/2p}(2^{2\alpha_{k-1}})}\leq \frac{1}{128\alpha_k}\frac{2^{2\alpha _k(1/p-2)}}{\Phi^{1/2p}(2^{2\alpha_k})}.
\end{eqnarray*}
	
Consequently,	
\begin{eqnarray} \label{11a}
\left\vert I\right\vert &\leq &\frac{1}{l_j}\underset{n=1}
{\overset{2^{2\alpha_k}-1}{\sum}}\frac{\left\vert S^w_n f\left( x\right)\right\vert}{n} \\
&\leq&\frac{1}{l_{2^{2\alpha_k}}}\frac{1}{128\alpha_k}\frac{2^{2\alpha _k(1/p-2)}}{\Phi^{1/2p}(2^{2\alpha_k})}
\sum_{n=1}^{2^{2\alpha_k}-1} \frac{1}{n}\leq\frac{1}{128\alpha_k}\frac{2^{2\alpha _k(1/p-2)}}{\Phi^{1/2p}(2^{2\alpha_k})}.  \notag
\end{eqnarray}

Let $2^{2\alpha _{k}}\leq n\leq 2^{2\alpha_{k}+1}-1.$ Then we have the
following
\begin{eqnarray*}
S^w_{n}f &=&\sum_{\eta =0}^{k-1}\sum_{v=2^{2\alpha _{\eta }}}^{2^{2\alpha
_{\eta }+1}-1}\widehat{f}^w(v)w_{v}+\sum_{v=2^{2\alpha _{k}}}^{n-1}\widehat{f}^w(v)w_{v} \\
&=&\sum_{\eta=0}^{k-1}\frac{2^{{2\alpha _{\eta }}\left(1/p-1\right)}} {\Phi^{1/2p}(2^{2\alpha_\eta})}\left( D^w_{2^{2\alpha _{\eta}+1}} -D^w_{2^{2\alpha_{\eta}}}\right)+\frac{2^{{2\alpha_k}\left( 1/p-1\right)}} {\Phi^{1/2p}(2^{2\alpha_k})}\left(D^w_{n}-D^w_{2^{2\alpha_k}}\right) .
\end{eqnarray*}
	
This gives that
	
\begin{equation}\label{12a}
II=\frac{1}{l_j}\underset{n=2^{2\alpha_k}}{\overset{2^{2\alpha _{k}+1}}{\sum }}\ \frac{1}{n}\left(\sum_{\eta=0}^{k-1}\frac{2^{2\alpha _{\eta}\left(1/p-1\right)}}{\Phi^{1/2p}(2^{2\alpha_\eta})}\left( D^w_{2^{2\alpha_{\eta}+1}}-D^w_{2^{2\alpha _{\eta }}}\right)\right)  
\end{equation}%
\begin{equation*}
+\frac{1}{l_{j}}\frac{2^{2\alpha_k\left(1/p-1\right)}}{\Phi^{1/2p}(2^{2\alpha_k})}\sum_{n=2^{2\alpha_k}}^{j}\frac{\left( D^w_n-D^w_{2^{2\alpha _k}}\right) }{n}:=II_{1}+II_{2}.
\end{equation*}
	
Let $x\in I_{2}(e_{0}+e_{1})\in I_{0}\backslash I_{1}.$ According to well-known equalities for Dirichlet kernels (for details see e.g. \cite{G-E-S} and \cite{S-W-S}):
recall that
\begin{equation} \label{1dn}
D^w_{2^{n}}\left( x\right)=\left\{ 
\begin{array}{ll}
2^{n}, & \,\text{if\thinspace \thinspace \thinspace }x\in I_{n} \\ 
0, & \text{if}\ \ \ x\notin I_{n}
\end{array}
\right.  
\end{equation}
and
\begin{equation}\label{2dn}
D^w_n=w_n\overset{\infty }{\underset{k=0}{\sum }}n_kr_kD^w_{2^k}=w_n
\overset{\infty }{\underset{k=0}{\sum }}n_{k}\left(
D^w_{2^{k+1}}-D^w_{2^{k}}\right),\text{ for \ }n=\overset{\infty }{\underset{i=0}{\sum }}n_{i}2^{i},
\end{equation}
so we can conclude that
\begin{equation*}
D^w_n\left( x\right) =\left\{ 
\begin{array}{ll}
w_{n}, & \,\text{if\thinspace \thinspace \thinspace }n\ \ \text{is odd
number,} \\ 
0, & \text{if}\,\,n\ \ \text{is even number.}
\end{array}%
\right.
\end{equation*}
	
Since $\alpha _{0}\geq 2,\ \ k\in \mathbb{N}$ we obtain that $2\alpha_{k}\geq 4,$ for all $k\in \mathbb{N}$ and if we apply (\ref{1dn}) we get that
\begin{equation}
II_{1}=0  \label{13a}
\end{equation}
and 
\begin{eqnarray*}
&&II_2=\frac{1}{l_j}\frac{2^{2\alpha _k(1/p-1)}}{\Phi^{1/2p}(2^{2\alpha_k})}\sum_{n=2^{2\alpha _k-1}}^{(j-1)/2}\frac{w_{2n+1}}{2n+1}=\frac{1}{l_j}\frac{2^{2\alpha _k(1/p-1)}r_{1}}{\Phi^{1/2p}(2^{2\alpha_k})}\sum_{n=2^{2\alpha _k-1}}^{(j-1)/2}\frac{w_{2n}}{2n+1}.
\end{eqnarray*}
	
Let $x\in I_{2}(e_{0}+e_{1}).$ Then, by the definition of Walsh functions, we get that
\begin{equation*}
w_{4n+2}=r_{1}w_{4n}=-w_{4n}
\end{equation*}
and
\begin{eqnarray} \label{14a}
&&\left\vert II_2\right\vert=\frac{1}{l_j}\frac{2^{2\alpha _k(1/p-1)}}{\Phi^{1/2p}(2^{2\alpha_k})}\left\vert \sum_{n=2^{2\alpha _k-1}}^{(j-1)/2}\frac{w_{2n}}{2n+1}\right\vert 
\end{eqnarray}
\begin{eqnarray*}
&=&\frac{1}{l_j}\frac{2^{2\alpha_k(1/p-1)}}{\Phi^{1/2p}(2^{2\alpha_k})}\left\vert\frac{w_{j-1}}{j}+\sum_{n=2^{2\alpha_k-2}+1}^{(j-1)/4}\left(\frac{w_{4n-4}}{4n-3}+\frac{w_{4n-2}}{4n-1}\right)\right\vert  \\ \notag
&=&\frac{1}{l_j}\frac{2^{2\alpha _{k}(1/p-1)}}{\Phi^{1/2p}(2^{2\alpha_k})}\left\vert \frac{w_{j-1}}{j}+\sum_{n=2^{2\alpha_{k}-2}+1}^{(j-1)/4}\left( \frac{w_{4n-4}}{4n-3}-\frac{w_{4n-2}}{4n-1}\right) \right\vert  \\ \notag
&\geq&\frac{c}{\log({2^{2\alpha_k+1}})}\frac{2^{2\alpha_k(1/p-1)}}{\Phi^{1/2p}(2^{2\alpha_k})}\left( \left\vert\frac{w_{j-1}}{j}\right\vert-\sum_{n=2^{2\alpha _{k}-2}+1}^{(j-1)/4} \left\vert{w_{4n-4}}\right\vert\left( \frac{1}{4n-3}-\frac{1}{4n-1}\right) \right)  \\ \notag
&&\geq\frac{1}{4\alpha_k}\frac{2^{2\alpha_k(1/p-1)}}{\Phi^{1/2p}(2^{2\alpha_k})}\left( \frac{1}{j}-\sum_{n=2^{2\alpha_k-2}+1}^{(j-1)/4}\left( \frac{1}{4n-3}-\frac{1}{4n-1}\right)\right).
\end{eqnarray*}

By simple calculation we can conclude that
\begin{eqnarray*}
&&\sum_{n=2^{2\alpha_k-2}+1}^{(j-1)/4}\left(\frac{1}{4n-3}- \frac{1} {4n-1}\right)=\sum_{n=2^{2\alpha_k-2}+1}^{(j-1)/4}\frac{2}{(4n-3)(4n-1)} \\
&\leq& \sum_{n=2^{2\alpha _k-2}+1}^{(j-1)/4} \frac{2}{(4n-4)(4n-2)}=\frac{1}{2}\sum_{n=2^{2\alpha _k-2}+1}^{(j-1)/4} \frac{1}{(2n-2)(2n-1)} \\
&\leq&\frac{1}{2}\sum_{n=2^{2\alpha _k-2}+1}^{(j-1)/4} \frac{1}{(2n-2)(2n-2)}=\frac{1}{8}\sum_{n=2^{2\alpha _k-2}+1}^{(j-1)/4} \frac{1}{(n-1)(n-1)} \\
&\leq& \frac{1}{8}\sum_{n=2^{2\alpha _k-2}+1}^{(j-1)/4} \frac{1}{(n-1)(n-2)}=\frac{1}{8}\sum_{l=2^{2\alpha _k-2}+1}^{(j-1)/4} \left(\frac{1}{n-2}-\frac{1}{n-1} \right)\\
&\leq &\frac{1}{8}\left(\frac{1}{2^{2\alpha_k-2}-1}-\frac{4}{j-5}\right)
\leq \frac{1}{8}\left(\frac{1}{2^{2\alpha_k-2}-1}-\frac{4}{j}\right).
\end{eqnarray*}

Since $2^{2\alpha _{k}}\leq j\leq 2^{2\alpha_{k}+1}-1,$ where $\alpha _{k}\geq 2,$ we obtain that 
$$\frac{2}{2^{2\alpha_k}-4}\leq \frac{2}{2^{4}-4}=\frac{1}{6}$$
and
\begin{eqnarray} \label{14aR}
\left\vert II_2\right\vert&\geq&\frac{1}{4\alpha_k}\frac{2^{2\alpha _k(1/p-1)}}{\Phi^{1/2p}(2^{2\alpha_k})}\left( \frac{1}{j}-\frac{1}{8}\left(\frac{1}{2^{2\alpha_k-2}-1}-\frac{4}{j}
\right)\right) \\ \notag
&\geq &\frac{1}{4\alpha_k}\frac{2^{2\alpha _k(1/p-1)}}{\Phi^{1/2p}(2^{2\alpha_k})}\left( \frac{3}{2j}-\frac{1}{2^{2\alpha_k+1}-8}\right) \\ \notag
&\geq &\frac{1}{4\alpha_k}\frac{2^{2\alpha _k(1/p-1)}}{\Phi^{1/2p}(2^{2\alpha_k})}\left( \frac{3}{4}\frac{1}{2^{2\alpha_{k}}}-\frac{1}{2}\frac{1}{2^{2\alpha_k}-4}\right) \\ \notag
&\geq &\frac{1}{4\alpha_k}\frac{2^{2\alpha _k(1/p-1)}}{\Phi^{1/2p}(2^{2\alpha_k})}\left( \frac{1}{4}\frac{1}{2^{2\alpha_{k}}}+\frac{1}{2}\frac{1}{2^{2\alpha_{k}}}-\frac{1}{2}\frac{1}{2^{2\alpha_k}-4}\right) \\ \notag
&=&\frac{1}{4\alpha_k}\frac{2^{2\alpha _k(1/p-1)}}{\Phi^{1/2p}(2^{2\alpha_k})}\left( \frac{1}{4}\frac{1}{2^{2\alpha_{k}}}-\frac{2}{2^{2\alpha_k}(2^{2\alpha_k}-4)}\right)
\\ \notag
&\geq &\frac{1}{4\alpha_k}\frac{2^{2\alpha _k(1/p-1)}}{\Phi^{1/2p}(2^{2\alpha_k})}\left( \frac{1}{4}\frac{1}{2^{2\alpha_{k}}}-\frac{1}{6}\frac{1}{2^{2\alpha_{k}}}\right)
\\ \notag
&\geq &\frac{1}{48\alpha_k}\frac{2^{2\alpha _k(1/p-2)}}{\Phi^{1/2p}(2^{2\alpha_k})}
\geq \frac{1}{64\alpha_k}\frac{2^{2\alpha _k(1/p-2)}}{\Phi^{1/2p}(2^{2\alpha_k})}.
\end{eqnarray}
	
By combining (\ref{5}), (\ref{10a})-(\ref{14aR}) for $\in I_{2}(e_{0}+e_{1})$ and $0<p<1/2$ we find that

\begin{eqnarray*}
&&\left\vert R^w_j f\left( x\right) \right\vert \geq
\left\vert II_{2}\right\vert-\left\vert II_{1}\right\vert -\left\vert I \right\vert\\
&\geq &\frac{1}{64\alpha_k}\frac{2^{2\alpha _k(1/p-2)}}{\Phi^{1/2p}(2^{2\alpha_k})}-\frac{1}{128\alpha_k}\frac{2^{2\alpha _k(1/p-2)}}{\Phi^{1/2p}(2^{2\alpha_k})}
=\frac{1}{128\alpha_k}\frac{2^{2\alpha _k(1/p-2)}}{\Phi^{1/2p}(2^{2\alpha_k})}.
\end{eqnarray*}
	
Hence,
\begin{eqnarray} \label{16}
&&\left\Vert R^w_{j}f\right\Vert_{weak-L_p(G_2)}^p  \\ \notag
&\geq &\frac{1}{128\alpha_k^p}\frac{2^{2\alpha _k(1-2p)}}{\Phi^{1/2}(2^{2\alpha_k})}\mu\left\{ x\in G_2:\left\vert R^w_{j}f\right\vert \geq \frac{1}{128\alpha_k}\frac{2^{2\alpha _k(1/p-2)}}{\Phi^{1/2p}(2^{2\alpha_k})}\right\} ^{1/p} \\ \notag
&\geq &\frac{1}{128\alpha_k^p}\frac{2^{2\alpha _k(1-2p)}}{\Phi^{1/2}(2^{2\alpha_k})}\mu \left\{ x\in I_{2}(e_{0}+e_{1}):\left\vert
R^w_{j}f\right\vert \geq \frac{1}{128\alpha_k}\frac{2^{2\alpha _k(1/p-2)}}{\Phi^{1/2p}(2^{2\alpha_k})}\right\} \\ \notag
&\geq &\frac{1}{128\alpha_k^p}\frac{2^{2\alpha _k(1-2p)}}{\Phi^{1/2}(2^{2\alpha_k})}(\mu \left( x\in I_{2}(e_{0}+e_{1})\right))>\frac{1}{516\alpha_k^p}\frac{2^{2\alpha _k(1-2p)}}{\Phi^{1/2}(2^{2\alpha_k})}.
\end{eqnarray}
Moreover,

\begin{eqnarray*}
&&\underset{j=1}{\overset{\infty }{\sum }}\frac{\left\Vert R^w
_{j}f\right\Vert _{weak-L_p(G_2)}^{p}\log^p{(j)}\Phi(j)}{j^{2-2p} } 
\end{eqnarray*}
\begin{eqnarray*}
&\geq& \underset{\left\{ j\in \mathbb{A}_{0,2}:\text{ }2^{2\alpha_k}< j\leq 2^{2\alpha_{k}+1}-1\right\} }{\sum }\frac{\left\Vert R^w
_{j}f\right\Vert _{weak-L_p}^{p}\log^p{(j)}\Phi(j)}{j^{2-2p} } \\
&\geq &\frac{c}{\alpha_k^p}\frac{2^{2\alpha _k(1-2p)}}{\Phi^{p/2}(2^{2\alpha_k)}}\underset{\left\{ j\in \mathbb{A}_{0,2}:\text{ }2^{2\alpha_k}< j\leq 2^{2\alpha_{k}+1}-1\right\} }{\sum }\frac{\log^p{(j)}\Phi(j)}{j^{2-2p}}\\
&\geq &\frac{c\Phi(2^{2\alpha_{k}})\log^p{(2^{2\alpha_k})}}{\alpha_k^p}\frac{2^{2\alpha _k(1-2p)}}{\Phi^{1/2}(2^{2\alpha_k})}\underset{\left\{ j\in \mathbb{A}_{0,2}:\text{ }2^{2\alpha_k}< j\leq 2^{2\alpha_{k}+1}-1\right\} }{\sum }\frac{1}{j^{2-2p}}\\ 
&\geq& \Phi^{1/2}(2^{2\alpha_{k}})\rightarrow \infty ,\text{ \ as \ \ }k\rightarrow \infty.
\end{eqnarray*}
	
	The proof is complete.
\end{proof}

\section{\textbf{FINAL REMARKS AND OPEN PROBLEMS}}

In this section we present some final remarks and open problems, which can be interesting for the researchers, who work in this area. The first problem reads:
\begin{problem} For any $f\in {H_{1/2}},$ is it possible to find strong convergence theorems for Reisz means $R^w_m,$ where $\alpha =w\text{ or }\alpha=\psi$?
\end{problem} 
\begin{remark} Similar problems for Fej\'er means with respect to Walsh and Vilenkin systems can be found in \cite{BPTW}, \cite{B-T}, \cite{tep11} (see also \cite{tut1} and \cite{We}). Our method and estimations of Reisz and Fej\'er kernels (see Lemmas 1 and 2) do not give possibility to prove even similar strong convergence result as for the case of Fejer means. In particular, for any $f\in H_{1/2},$ is it possible to prove the following inequality:
\begin{equation*}
\frac{1}{\log n}\overset{n}{\underset{k=1}{\sum}} \frac{\left\Vert R^\alpha_k f\right\Vert_{1/2}^{1/2}}{k}\leq
c\left\Vert f\right\Vert_{H_{1/2}}^{1/2},  \ \text{where} \ \alpha =w\text{ or }\alpha =\psi?
\end{equation*}
\end{remark} 
It is interesting to generalize Theorem \ref{reisz_negative_th_2} for Vilenkin systems:
\begin{problem}
For $0<p<1/2$ and any non-decreasing function $\Phi :\mathbb{N}\rightarrow \lbrack 1,\infty )$ satisfying the conditions 
\begin{equation*}
\underset{n\rightarrow \infty }{\lim }\Phi \left( n\right) =+\infty,
\end{equation*}
is it possible to find a martingale $f\in H_p\left(G_m\right) $ such that
\begin{equation*}
\sum_{n=1}^{\infty}\frac{\log^p n \left\Vert R^\psi_nf\right\Vert_p^p\Phi\left(n\right)}{n^{2-2p}} =\infty.
\end{equation*}
where $R^\psi_{n}f$ denotes the $n$-th Reisz logarithmic means with respect to Vilenkin-Fourier series of $f.$ 
\end{problem}

\begin{problem} Is it possible to find a martingale $f\in {H_{1/2}},$ such that 
$$\sup_{n\in \mathbb{N}} \left\Vert R^\alpha_n f\right\Vert_{1/2}=\infty,$$
where $\alpha =w\text{ or }\alpha =\psi$?
\end{problem} 

\begin{remark}
For $0<p<1/2,$ divergence in the space $L_p$ of Reisz logarithmic means with respect to Walsh and Vilenkin systems of martingale $f\in H_p$  was already proved in \cite{PTW1}.
\end{remark}

\begin{problem}
For any $f\in {H_p}$ \ $\left( 0<p\le 1/2 \right),$ is it possible to find necessary and sufficient conditions for the indexes 
$k_j$ for which 
\[\begin{matrix}
{\left\|R^\alpha_{k_j} f-f \right\|}_{{H_p}}\to 0, & as & j\to \infty,  \\
\end{matrix}\]
where $\alpha =w\text{ or }\alpha =\psi$?
\end{problem} 
\begin{remark}
Similar problem for partial sums and Fejer means with respect to Walsh and Vilenkin systems can be found in Tephnadze \cite{tep8}, \cite{tep9} and \cite{tep10}.
\end{remark}

\begin{problem}
Is it possible to find necessary and sufficient conditions in terms of the one-dimensional modulus of continuity of martingale 
$f\in {H_p}$ \ $\left( 0<p\le 1/2 \right),$ for which 
\[\begin{matrix}
{\left\|R^\alpha_j f-f \right\|}_{{H_p}}\to 0, & as & j\to \infty,  \\
\end{matrix}\]
where $\alpha =w\text{ or }\psi$?
\end{problem} 
\begin{remark}
Approximation properties of some summability methods in the classical and real Hardy spaces were considered by Oswald \cite{Os}, Kryakin and Trebels \cite{KT}, Storoienko \cite{St1}, \cite{St2} and for martingale Hardy spaces in Fridli, Manchanda and Siddiqi \cite{FMS} (see also \cite{4}, \cite{FR}),  Nagy \cite{nagy}, \cite{na}, \cite{n}, Tephnadze \cite{tep8}, \cite{tep9}, \cite{tep10}.
\end{remark}

\end{document}